\documentclass[a4paper,preprint,12pt]{elsarticle}
\usepackage{xcolor}
\definecolor{cblue}{RGB}{0,70,140}
\definecolor{cgreen}{RGB}{100,140,0}
\definecolor{cred}{RGB}{190,10,50}

\usepackage[hidelinks,colorlinks,pagebackref]{hyperref} 
\hypersetup{citecolor=cgreen,linkcolor=cblue,urlcolor=cblue}
\usepackage{amsmath,amssymb,amsfonts,amsthm,stmaryrd,mathtools} 
\usepackage{csquotes} 
\usepackage{cleveref}
\usepackage{cellspace} 
\usepackage{subcaption}

\newtheorem{theorem}{Theorem}

\newtheorem{conjecture}{Conjecture}
\newtheorem{definition}{Definition}

\newtheorem{question}{Question}

\newtheorem{corollary}{Corollary}
\newtheorem{observation}{Observation}
\crefname{observation}{observation}{observations}
\crefname{question}{question}{questions}

\newcommand{\etal}{\textit{et al.} }
\newcommand{\bipMinorOf}{\leq_{B}}
\newcommand{\minorOf}{\leq_{M}}
\newcommand{\curlybrackets}[1]{\left\{#1\right\}}
\newcommand{\IN}{\mathbb{N}}
\renewcommand{\mod}[1]{\text{ (mod }#1\text{)}}
\newcommand{\scale}{0.33}

\begin{document}

\begin{abstract}
    In \enquote{Bipartite minors} [Journal of Combinatorial Theory, Series B, 2016], Chudnovsky \etal introduced the \emph{bipartite minor relation}, a quasi-order on the class of bipartite graphs somewhat analogous the \emph{minor relation} on general graphs and asked whether it is a well-quasi-order.
    We answer this question negatively by giving an infinite set of $2$-connected bipartite graphs that are pairwise incomparable with respect to the bipartite minor relation.
    We additionally give two sets of infinitely many pairs of bipartite graphs: one set of pairs $G,H$ such that $H$ is a bipartite minor, but not a minor, of $G$, and one set of pairs $G,H$ such that $H$ is a minor, but not a bipartite minor, of $G$.
\end{abstract}
\begin{keyword}
    Bipartite graphs; Well-quasi-order; Graph minors
\end{keyword}

\begin{frontmatter}
    \title{Bipartite Graphs Are Not Well-Quasi-Ordered by Bipartite Minors}

    \author[1]{Therese Biedl\corref{cor1}\fnref{fn1}}
    \ead{biedl@uwaterloo.ca}
    
    \author[1]{Dinis Vitorino}
    \ead{dinis@addition.pt}
    
    \affiliation[1]{organization={Cheriton School of Computer Science, University of Waterloo},
        addressline={200 University Avenue West},
        city={Waterloo},
        postcode={ON N2L 3G1},
        state={Ontario},
        country={Canada}
    }
    
    \cortext[cor1]{Corresponding author}
    \fntext[fn1]{Research supported by NSERC.}
\end{frontmatter}

\section{Introduction}
\label{sec: intro}
Quasi-orders on classes of graphs\footnote{All graphs considered in this paper are finite and simple.
Additionally, two isomorphic graphs are treated as the same object.} are ubiquitous in graph theory.
This is especially true of structural graph theory, the study of the graphs contained in graphs with particular properties.
K\H{o}nig's characterization of bipartite graphs in terms of their forbidden subgraphs \cite{graphenAnwendungKonig}, Kuratowski's characterization of planar graphs in terms of forbidden topological minors \cite{Kuratowski1930}, and Wagner's characterization of planar graphs via their forbidden minors \cite{wagnerDecompTheorem} are among the earliest results of this sort.
The main outcome of Robertson and Seymour's Graph Minors Project, \cite{RSGraphMinorsI} to \cite{RSGraphMinorsXXIII}, was a qualitative characterization of the structure of elements of minor-closed classes of graphs.
Sometimes, one is interested only in the elements of a minor-closed class that have some additional properties.
For instance, one could be interested only in the bipartite graphs in some minor-closed class.
It would be useful to have a description of these graphs in terms different from \enquote{they are the elements of the class that, in addition to being in the class, are bipartite.}
In \cite{bipartiteMinors}, Chudnovsky \etal introduce the bipartite minor relation, a relation defined somewhat similarly to the \enquote{standard} minor relation, but with the property that all bipartite minors of a bipartite graph are themselves bipartite.
This relation was then used to describe the class of bipartite planar graphs in terms of their forbidden bipartite minors.
Similar characterizations of the classes of forests and bipartite outerplanar graphs were also obtained in \cite{bipartiteMinors}.

One natural question to ask when confronted with a new containment relation on graphs is whether it is a well-quasi-order, a term we define below.
This is known to be true of the \enquote{standard} minor relation, i.e., the class of all graphs is well-quasi-ordered with respect to this relation \cite{RSGraphMinorsXX}.
The bipartite minor relation was designed for the study of bipartite graphs.
Hence, when thinking about it, it makes the most sense to focus only on bipartite graphs.
In \cite{bipartiteMinors}, Problem 2.7, it was asked whether the bipartite minor relation is a well-quasi-order on the class of bipartite graphs.

In this short note, we consider interactions between the minor and bipartite minor relations.
In \Cref{subsec: bipartite minor but not minor}, we show that there exist infinitely many pairs of graphs such that one element of this pair is a bipartite minor, but not a minor, of the other.
Similarly, in \Cref{subsec: minor but not bip minor}, we describe an infinite collection of pairs of graphs such that one element of this pair is a minor, but not a bipartite minor, of the other.
Finally, in \Cref{sec: bip minors and well-order}, we use the construction from \Cref{subsec: minor but not bip minor} to prove that the bipartite minor relation is not a well-quasi-order on the class of $2$-connected bipartite graphs.

It should be pointed out that other variations of the minor relation that are compatible with bipartite-ness were studied in \cite{RecCharMaxBipPlanarGraphs}.
One of the relations studied in \cite{RecCharMaxBipPlanarGraphs}, which the authors refer to as the bipartite-minor relation, is a well-quasi-order on the class of bipartite graphs.
This relation is, however, different from the one proposed in \cite{bipartiteMinors} and studied throughout this note.

\section{Preliminaries}
\label{sec: prelims}
Let $G$ be a graph and $U$ be a vertex set.
The graph $G-U$ is the graph obtained from $G$ by deleting each vertex in $U$ (together with all edges incident to each of them).
Similarly, for an edge set $F$, $G-F$ is the graph obtained from $G$ by deleting each edge in $F$.
A graph that can be obtained from $G$ via vertex- and edge-deletions is a \emph{subgraph} of $G$.

$G$ is \emph{connected} if for any pair of vertices in $G$, there is a path in $G$ containing both, otherwise it is \emph{disconnected}.
$G$ is \emph{$k$-connected} if for all vertex sets $U$ with cardinality at most $k-1$, the subgraph $G-U$ is connected.
A \emph{block} of $G$ is a $2$-connected subgraph of $G$ to which no vertices or edges (of $G$) can be added while retaining $2$-connectivity.
A block containing only a single edge is \emph{trivial}.

A \emph{path} in $G$ is a sequence of vertices $v_0,\dots,v_{n-1}$ such that for all $i\in[0,n-2]$ the vertices $v_i$ and $v_{i+1}$ are adjacent.
For $P$ a path, we set $V(P)\coloneqq\curlybrackets{v_0,\dots,v_{n-1}}$.
The vertices of a path $P$ that have degree one are its \emph{endpoints} and its remaining vertices are its \emph{internal vertices}.
A path $P$ in $G$ is \emph{induced} if no pair of vertices that are non-consecutive in $P$ is adjacent.
We will denote the graph that is exactly the path on $k$ vertices by $P_k$.

A \emph{cycle} in $G$ is a sequence of vertices $v_0,\dots,v_{n-1}$, $n\geq 3$, such that for all $i\in[0,n-1]$ the vertices $v_i$ and $v_{(i+1)\mod n}$ are adjacent.
For $C$ a cycle, we set $V(C)\coloneqq\curlybrackets{v_0,\dots,v_{n-1}}$.
A \emph{chord} of a cycle $C=v_0,\dots,v_{n-1}$ in $G$ is an edge $v_iv_j$ between non-consecutive vertices in $C$, i.e., with $j-i\mod n\notin\{1,n-1\}$.
A cycle $C$ in $G$ is \emph{induced} if it has no chord.
A cycle $C$ is \emph{non-separating} if the number of connected components of $G$ is no smaller than that of $G-V(C)$.
In \cite{bipartiteMinors}, cycles that are both induced and non-separating are called \emph{peripheral}, but we will generally avoid this term.
We will denote the graph that is exactly the cycle on $k$ vertices by $C_k$.
A \emph{forest} is an acyclic graph, i.e., a graph with no cycles.

A \emph{bull}\footnote{This is a generalization of the bull graph from \cite{bullGraphIntro}.} with a \emph{snout} of length $l\geq 3$ and $k\leq l$ \emph{horns} of length $l_1,\dots,l_k$, is denoted by $B(l,l_1,\dots,l_k)$ and is a graph obtained from a cycle $C_l$ and paths $P_{l_1},\dots,P_{l_k}$ by picking $k$ vertices $v_1,\dots,v_k$ that are consecutive in $C_l$ and adding an edge between $v_i$ and an endpoint of $P_{l_i}$ for each $i\in[1,k]$.
Examples can be found in \Cref{fig:bull example}.
\begin{figure}[ht]
    \centering
    \begin{subfigure}[t]{0.3\linewidth}
        \centering
        \includegraphics[width=0.4\linewidth]{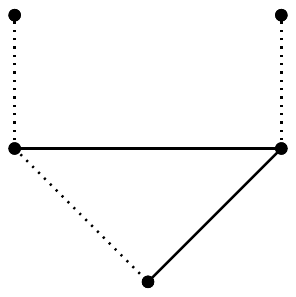}
        \caption{A two-horned bull.}
    \end{subfigure}
    \hfill
    \begin{subfigure}[t]{0.3\linewidth}
        \centering
        \includegraphics[width=0.4\linewidth]{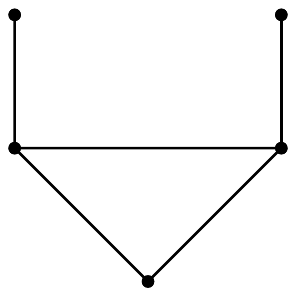}
        \caption{$B(3,1,1)$.}
    \end{subfigure}
    \hfill
    \begin{subfigure}[t]{0.3\linewidth}
        \centering
        \includegraphics[width=0.4\linewidth]{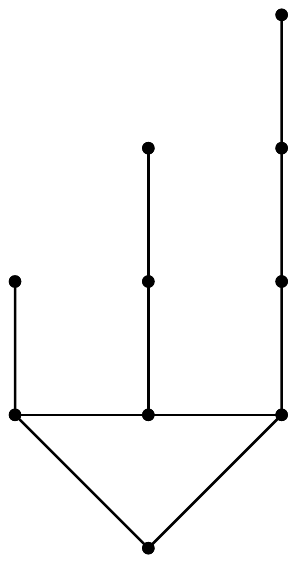}
        \caption{$B(4,1,2,3)$.}
    \end{subfigure}
    \caption{Some examples of bulls. Here and elsewhere, dashed lines indicate edges or induced paths.}
    \label{fig:bull example}
\end{figure}

A \emph{dog} with a \emph{snout} of length $l\geq 3$ and $k\leq l/2$ \emph{ears} of length $l_1,\dots,l_k$ all greater than two, denoted by $D(l,l_1,\dots,l_k)$, is a graph obtained from a cycle $C_l$ and cycles $C_{l_1},\dots,C_{l_k}$ by picking $2k$ vertices 
$v_1,\dots,v_{2k}$ that are consecutive in $C_l$ and two consecutive vertices $w_{i},x_{i}$ in each $C_{l_i}$ for $i\in[1,k]$, and identifying $v_{2i-1},v_{2i}$ and $w_{i},x_{i}$ (as well as the edges $v_{2i-1}v_{2i}$ and $w_{i}x_{i}$) for each $i\in[1,k]$.
Examples can be found in \Cref{fig:dog example}.
\begin{figure}[ht]
    \centering
    \begin{subfigure}[t]{0.3\linewidth}
        \centering
        \includegraphics[width=0.7\linewidth]{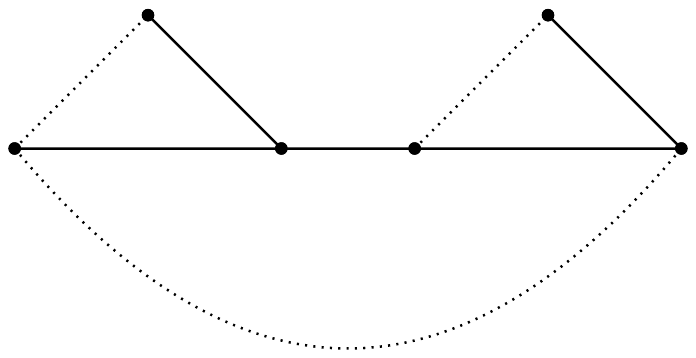}
        \caption{A two-eared dog.}
    \end{subfigure}
    \hfill
    \begin{subfigure}[t]{0.3\linewidth}
        \centering
        \includegraphics[width=0.7\linewidth]{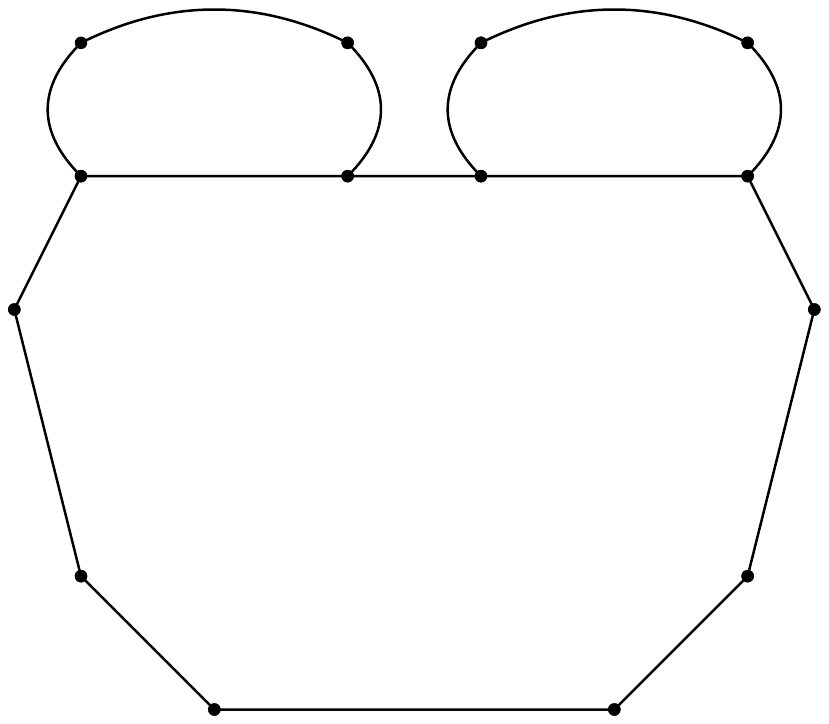}
        \caption{$D(10,4,4)$.}
    \end{subfigure}
    \hfill
    \begin{subfigure}[t]{0.3\linewidth}
        \centering
        \includegraphics[width=0.7\linewidth]{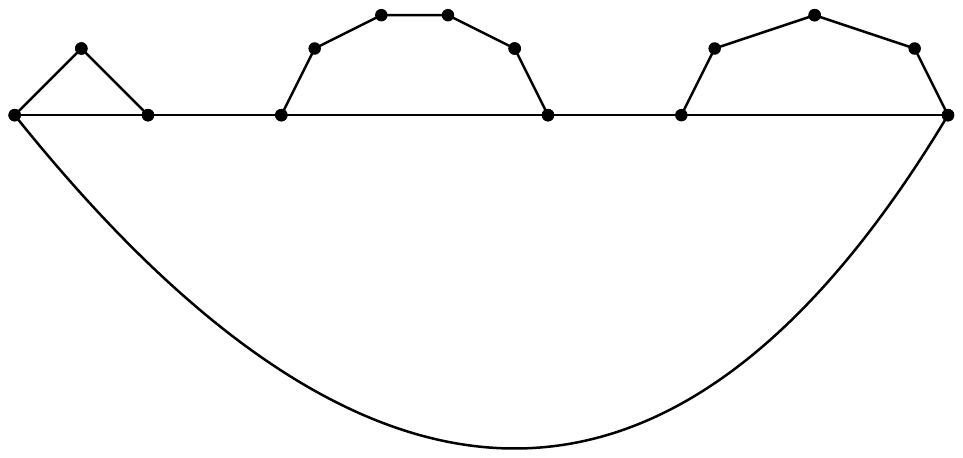}
        \caption{$D(6,3,6,5)$.}
    \end{subfigure}
    \caption{Some examples of dogs.}
    \label{fig:dog example}
\end{figure}

The \emph{contraction} of a vertex set $U$ in a graph $G$ is the graph $G/U$ obtained from $G$ by removing all vertices in $U$, adding a new vertex $v_U$, and making $v_U$ adjacent to all neighbours of vertices in $U$ (provided these are not themselves in $U$).
An \emph{edge-contraction} is a contraction $G/\{u,v\}$ for an edge $uv\in E(G)$.
Edge-contractions, together with edge-, and vertex-deletions give rise to the minor relation in graphs.
Though standard, we define it below for comparison with the, less standard, bipartite minor relation.
\begin{definition}
    Let $G$ and $H$ be graphs.
    The graph $H$ is a \emph{minor} of $G$, denoted by $H\minorOf G$, if there is a sequence of graphs $H=H_0,H_1,\dots,H_k=G$ such that for all $i\in[0,k-1]$, the graph $H_i$ can be obtained from $H_{i+1}$ by deleting a vertex, deleting an edge, or contracting an edge.
\end{definition}
The definition of the bipartite minor relation is almost identical.
However, it requires the introduction of a new operation on graphs, namely an \emph{admissible contraction}.
\begin{definition}[{\cite[p. 221]{bipartiteMinors}}]
    Let $G$ be a graph and $u,v$ be vertices in $G$.
    The contraction of the vertices $u$ and $v$ is \emph{admissible} if $u$ and $v$ have a common neighbour $w$ such that the path $u,w,v$ is part of an induced non-separating cycle in $G$.
\end{definition}
It is worth highlighting that the two vertices contracted in an admissible contraction do not have to be adjacent.
For instance, following an example mentioned in \cite[p. 221]{bipartiteMinors}, performing an admissible contraction on the six-cycle $C_6$ yields the one-horned bull with a snout of length four and a horn of length one $B(4,1)$, cf. \Cref{fig:C6 to B(4 1)}.
\begin{figure}[ht]
    \centering
    \begin{subfigure}{0.32\linewidth}
        \centering
        \includegraphics[scale=0.4]{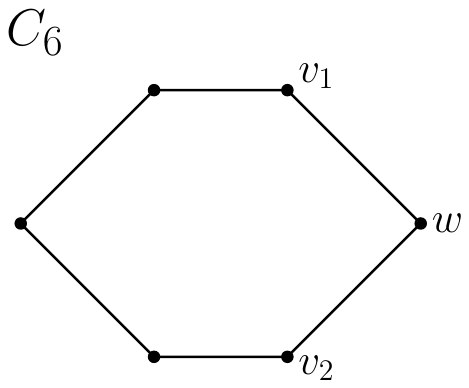}
    \end{subfigure}
    \begin{subfigure}{0.32\linewidth}
        \centering
        \includegraphics[scale=0.4]{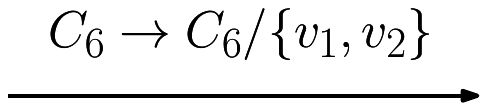}
    \end{subfigure}
    \begin{subfigure}{0.32\linewidth}
        \centering
        \includegraphics[scale=0.4]{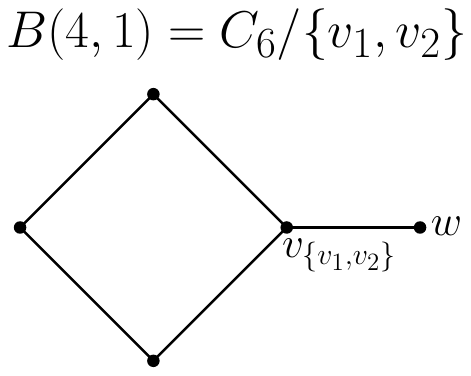}
    \end{subfigure}
    \caption{$C_6$ contracts into $B(4,1)$.}
    \label{fig:C6 to B(4 1)}
\end{figure}

\begin{definition}[{\cite[p. 221]{bipartiteMinors}}]
    Let $G$ and $H$ be graphs.
    The graph $H$ is a \emph{bipartite minor} of $G$, denoted by $H\bipMinorOf G$, if there is a sequence of graphs $H=H_0,H_1,\dots,H_k=G$ such that for all $i\in[0,k-1]$, the graph $H_i$ can be obtained from $H_{i+1}$ by deleting a vertex, deleting an edge, or performing an admissible contraction.
\end{definition}

A quasi-order is a binary relation that is \emph{transitive} and \emph{reflexive}.
Note that the subgraph, minor, and bipartite minor relations are quasi-orders on the class of all graphs.
Finally, a \emph{well-quasi-order} is a quasi-order $\leq$ on a class $\mathcal{K}$ such that every infinite sequence $x_{1},x_2,\dots$ of elements of $\mathcal{K}$ contains an increasing pair $x_i\leq x_j$ with $i<j$.
The subgraph relation  is not a well-quasi-order on the class of all graphs.
In fact, it is not even a well-quasi-order on the class of all trees, as shown by the set of trees obtained from two paths $P^{(1)},P^{(2)}$ of length three, and a path $P^{(3)}$ of length $l\geq 2$ by identifying one endpoint of $P^{(3)}$ with the internal vertex of $P^{(1)}$, and the other endpoint of $P^{(3)}$ with the internal vertex of $P^{(2)}$, cf. \Cref{fig:inf antichain forest subgraph}.
The set of graphs constructed in this way is an infinite set of forests that are pairwise incomparable with respect to the subgraph relation.

\begin{figure}[ht]
    \centering
    \begin{subfigure}[t]{0.3\linewidth}
        \centering
        \includegraphics[height=0.3\linewidth]{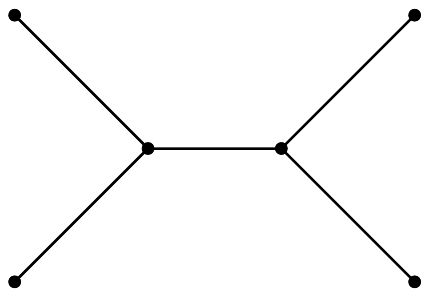}
    \end{subfigure}
    \hfill
    \begin{subfigure}[t]{0.3\linewidth}
        \centering
        \includegraphics[height=0.3\linewidth]{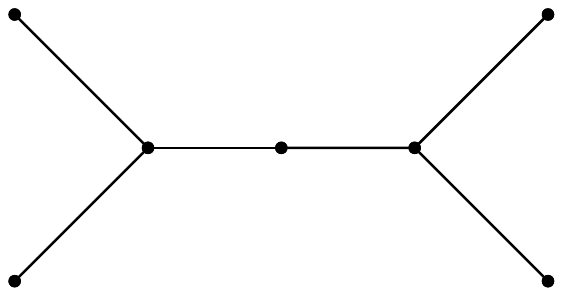}
    \end{subfigure}
    \hfill
    \begin{subfigure}[t]{0.3\linewidth}
        \centering
        \includegraphics[height=0.3\linewidth]{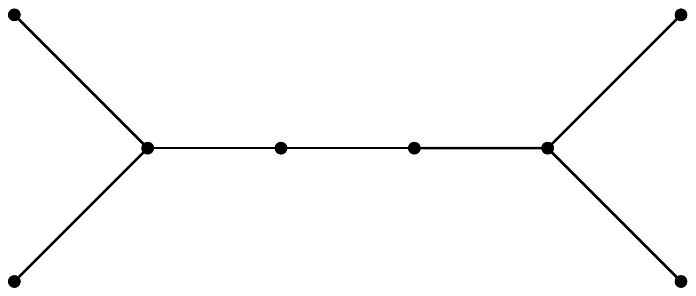}
    \end{subfigure}
    \caption{The three smallest elements of an infinite set of forests that are pairwise incomparable with respect to the subgraph relation.}
    \label{fig:inf antichain forest subgraph}
\end{figure}

\section{Bipartite Minors vs. Minors}
\label{sec: bip minors vs minors}
It follows immediately from the definition of $\bipMinorOf$ that if $G$ is a bipartite graph and $H$ is a bipartite minor of $G$, then $H$ is bipartite, cf. \cite[p. 221]{bipartiteMinors}.
This, together with the precise characterisations of the bipartite graphs in some minor-closed classes in terms of sets of forbidden bipartite minors obtained in \cite{bipartiteMinors} (Theorems 2.2, 2.4, and 2.5), might lead one to hope that the bipartite members of a minor-closed class of graphs can generally be described in terms of a set of forbidden bipartite minors.
For this to be the case, every minor-closed class would have to be closed under taking bipartite minors of bipartite graphs.
This leads to the following question.
\begin{question}
    \label{question: bip minor implies minor}
    Suppose $G$ and $H$ are bipartite graphs with $H\bipMinorOf G$.
    Does $H\minorOf G$ necessarily hold?
\end{question}
Despite not having been explicitly asked, the authors of \cite{bipartiteMinors} were clearly aware of \Cref{question: bip minor implies minor} and in fact answered it implicitly in two different parts of \cite{bipartiteMinors}:
On page 223 of \cite{bipartiteMinors}, it is pointed out that admissible contractions preserve neither linkless-embeddability nor embeddability into closed surfaces other than the plane.
Since these are minor-closed properties, there must exist graphs $G$ and $H$ with $H\bipMinorOf G$, but $H\not\minorOf G$.
Additionally, Remark 2.3 of \cite{bipartiteMinors} states that the (bipartite) graph obtained from $K_5$ by subdividing each edge precisely once has a $K_{3,3}$ bipartite minor, though it has no subgraph homeomorphic to $K_{3,3}$ (and hence has no $K_{3,3}$ minor).

Once one explicitly formulates \Cref{question: bip minor implies minor}, it is natural to think about its converse:
Is every bipartite graph $H$ that is a minor of a bipartite graph $G$ also a bipartite minor of $G$?
Unfortunately, this too is false.
Note that a forest has no cycles and hence no admissible contraction can be performed on a forest, i.e., a forest $F_1$ is a bipartite minor of a forest $F_2$ if and only if $F_1$ is a subgraph of $F_2$.
However, the forests in \Cref{fig:inf antichain forest subgraph} are pairwise incomparable with respect to the subgraph relation, but are from left to right increasingly ordered by the minor relation, yielding infinitely many counterexamples to the proposed converse of \Cref{question: bip minor implies minor}.
This is, however, a somewhat unsatisfactory answer, since it relies on a complete lack of cycles.
What happens if we require any two vertices in $G$ (or $H$) to be in a common cycle, i.e., that $G$ (or $H$) is $2$-connected?
\begin{question}
    \label{question: minor implies bip minor}
    Suppose $G$ and $H$ are $2$-connected bipartite graphs with $H$ a minor of $G$.
    Is $H$ necessarily a bipartite minor of $G$?
\end{question}
In Problem 2.7 of \cite{bipartiteMinors}, it was asked whether the bipartite  minor relation is a well-quasi-order on the class of bipartite graphs.
Again, noting that the bipartite minor relation reduces to the subgraph relation on forests and that forests are not well-quasi-ordered by the subgraph relation implies a negative answer to this question.
However, one could once again wonder about what happens to the class of $2$-connected bipartite graphs.
\begin{question}
    \label{question: wqo of bipartite minors}
    Is the bipartite minor relation $\leq_B$ a well-quasi-order on the class of $2$-connected bipartite graphs?
\end{question}

In this note, we answer all questions negatively.
In fact, we construct infinitely many counterexamples to affirmative answers to \Cref{question: bip minor implies minor,question: minor implies bip minor}.
The insights we use to answer \Cref{question: minor implies bip minor} immediately imply a negative answer to \Cref{question: wqo of bipartite minors} and hence solve a stronger form of Problem 2.7 of \cite{bipartiteMinors}.

\subsection{Bipartite Minor, but Not Minor}
\label{subsec: bipartite minor but not minor}
\begin{theorem}
    \label{thm: bip minor but not minor}
    For all integers $l\geq 3$ and $l_1\geq 1$, the bull $B(l,l_1)$ is a bipartite minor of $C_{l+2l_1}$, but $B(l,l_1)$ is not a minor of $C_p$ for any integer $p\geq 3$.
\end{theorem}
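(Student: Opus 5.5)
The proof splits into two independent parts: constructing $B(l,l_1)$ from $C_{l+2l_1}$ by admissible contractions, and then a degree-counting argument for the \enquote{not a minor} claim.

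\emph{Bipartite minor part.} I would argue by induction on $l_1$, generalizing the example $C_6 \to B(4,1)$ in \Cref{fig:C6 to B(4 1)}. The case $l_1=0$ is interpreted as $B(l)=C_l$, which is trivial. For the inductive step it suffices to show that $B(l+2,l_1-1)$ admits one admissible contraction yielding $B(l,l_1)$, since the induction hypothesis gives $B(l+2,l_1-1)\bipMinorOf C_{l+2+2(l_1-1)}=C_{l+2l_1}$.

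To set this up, let the snout of $B(l+2,l_1-1)$ be $c_0,c_1,\dots,c_{l+1}$, where the horn (if $l_1-1\geq 1$) is attached at $c_0$. Contract $u=c_l$ and $v=c_{l+2}$, both indices taken cyclically, so that $v=c_0$ and the common neighbour is $w=c_{l+1}$. The snout is an induced cycle, because the horn is a pendant path. It is also non-separating, because deleting it leaves only the horn path, which is connected, or the empty graph. Hence the contraction is admissible.

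Next I check the result. The merged vertex $x$ lies on the cycle $x,c_1,\dots,c_{l-1}$ of length $l\geq 3$. Its extra neighbour $c_{l+1}$ is now a pendant vertex, and the old horn of length $l_1-1$ still hangs from $x$. Together, $c_{l+1}$ and that horn form a single path through $x$, not one path. So the outcome is a snout with two attached paths at $x$, which is not a bull as defined.

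\emph{Fixing the construction.} The fix is to choose $u,w,v$ so that the new pendant vertex extends the existing horn. To do this, attach the horn at $w$'s position instead. Concretely, take the horn of $B(l+2,l_1-1)$ attached at $c_{l+1}$ and contract $c_l$ with $c_0$. The merged vertex $x$ is then on the cycle $x,c_1,\dots,c_{l-1}$, and it is adjacent to $c_{l+1}$, which is no longer on the cycle. So $c_{l+1}$ followed by the old horn forms a horn of length $l_1$ attached at $x$, giving exactly $B(l,l_1)$. Admissibility is verified as before. For $l_1=1$ this is the plain contraction of $C_{l+2}$, and it agrees with the $C_6$ example.

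\emph{Not a minor.} Every minor of $C_p$ is a minor of a cycle, and deleting an edge or vertex of a cycle gives a linear forest (a disjoint union of paths). I will prove that the minor-closure of cycles consists exactly of the cycles and the linear forests. This holds because contracting an edge of a cycle $C_q$ with $q\geq 4$ gives $C_{q-1}$, contracting an edge of $C_3$ gives $K_2$, and linear forests are closed under all three minor operations. In every such graph each vertex has degree at most $2$. But $B(l,l_1)$ contains the attachment vertex $v_1$ of degree $3$, so it is not a minor of any $C_p$.

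I expect the main obstacle to be the first part: choosing the contracted pair so that the pendant vertex created by the contraction merges with the existing horn rather than forming a second horn, and confirming non-separation at each step.
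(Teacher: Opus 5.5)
After your self-correction, the construction you settle on is exactly the paper's inductive step: put the horn at $c_{l+1}$ and contract its two snout-neighbours $c_l, c_0$, so that $c_{l+1}$ becomes the first vertex of a horn one longer. Your \enquote{not a minor} part is the same maximum-degree-two argument as the paper's, with a more explicit justification of why that property is closed under minors; the proof is correct, and you should simply drop the abandoned first attempt from the write-up.
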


\begin{proof}
    Note that the contraction of any pair of vertices of $C_{k'}$ with a common neighbour is admissible and yields $B(k'-2,1)$.
    Moreover, consider a bull $B(k',l')$ for some $k'\geq 5$ and $l'\geq 1$, let $v$ be the vertex on the snout of $B(k',l')$ with a neighbour on its horn, and let $u,w$ be the neighbours of $v$ that are on the snout of $B(k',l')$.
    The contraction of $u$ and $w$ in $B(k',l')$ is admissible and yields $B(k'-2,l'+1)$.
    Arguing inductively over $l$, this implies that $B(k,l)\bipMinorOf C_{k+2l}$.
    An illustration of the process through which $C_8$ can be contracted into $B(4,2)$ is given in \Cref{fig:C8 to B42}.

    \begin{figure}[ht]
        \centering
        \includegraphics[width=0.2\linewidth]{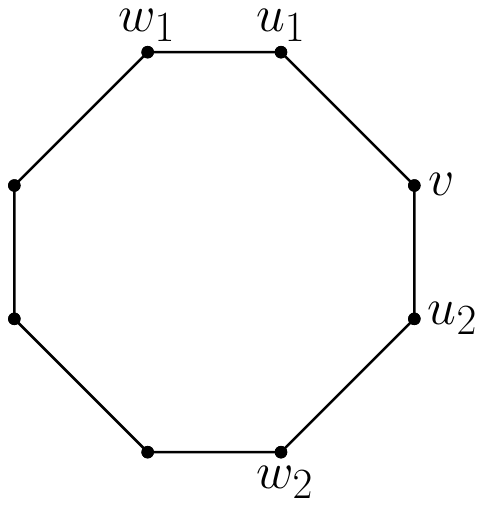}
        \includegraphics[width=0.1\linewidth]{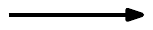}
        \includegraphics[width=0.2\linewidth]{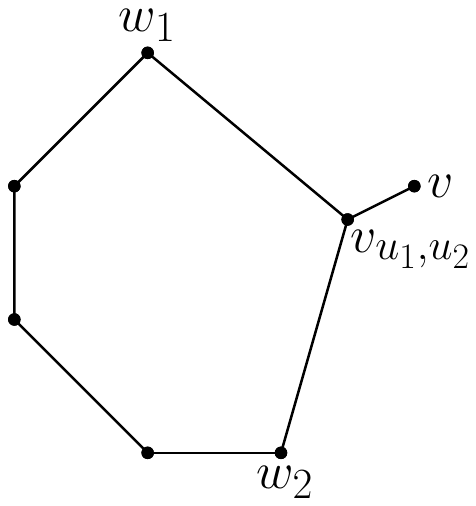}
        \includegraphics[width=0.1\linewidth]{arrow.pdf}
        \includegraphics[width=0.2\linewidth]{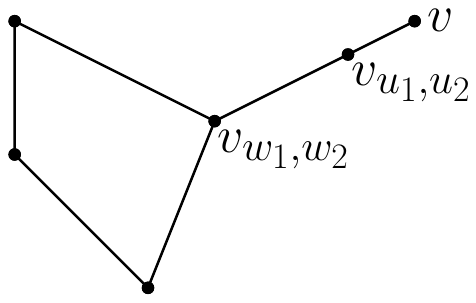}
        \caption{Contracting the cycle $C_8$ into the bull $B(4,2)$.}
        \label{fig:C8 to B42}
    \end{figure}

    To see that $B(k,l)\not\minorOf C_{p}$ for any $p\in\IN_{\geq 3}$, note that $C_p$ has maximum degree two and therefore so do all its minors.
    Since $B(k,l)$ has a vertex of degree three, it cannot be a minor of $C_{p}$.
\end{proof}
In particular, there exist infinitely many pairs of bipartite graphs $G$ and $H$ with $H\bipMinorOf G$, but $H\not\minorOf G$.

\subsection{Minor, but Not Bipartite Minor}
\label{subsec: minor but not bip minor}
\begin{theorem}
    \label{thm: minor but not bip minor}
    For all integers $l\geq 5$, $l_1,l_2\geq 3$, and $k\geq 1$, the dog $D(l,l_1,l_2)$ is a minor, but not a bipartite minor, of the dog $D(l+k,l_1,l_2)$.
\end{theorem}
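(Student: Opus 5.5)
The minor part is a direct construction. The non-minor part needs an invariant: we show that the snout can never be shortened by the operations allowed in a bipartite minor sequence.

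\textbf{Minor part.} In $D(l+k,l_1,l_2)$ the two ears occupy four consecutive snout vertices $v_1,\dots,v_4$. Since $l+k\geq 6$, the snout contains at least two further vertices, so it has an edge $xy$ with $x,y\notin\{v_1,\dots,v_4\}$. Contracting such an edge gives $D(l+k-1,l_1,l_2)$. Repeating this $k$ times, which is possible since the snout length stays at least $l\geq 5$, yields $D(l,l_1,l_2)$. Hence $D(l,l_1,l_2)\minorOf D(l+k,l_1,l_2)$.

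\textbf{Not a bipartite minor: a cycle-rank invariant.} I would use the cycle rank $r(G)=|E(G)|-|V(G)|+c(G)$, where $c(G)$ is the number of components.
\begin{itemize}
\item Edge and vertex deletions never increase $r$.
\item An admissible contraction of $u,v$ with common neighbour $w$ removes one vertex and at least one edge, since $uw$ and $vw$ merge, and it does not change connectivity. So it also does not increase $r$.
\item Both dogs have $r=3$.
\end{itemize}
Suppose $D(l,l_1,l_2)$ is obtained from $G=D(l+k,l_1,l_2)$ by a sequence of operations. Following the component that eventually contains $D(l,l_1,l_2)$, every step must preserve $r$ on that component. Consequently, no edge of a cycle is ever deleted, only pendant trees are. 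Also, every contraction merges exactly the two edges $uw,vw$ and nothing else, so the vertices $u$ and $v$ have no common neighbour other than $w$.

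\textbf{Main step: locating all admissible contractions.} I would maintain the following structure by induction along the sequence. The $2$-connected core is always a dog $D(s,m_1,m_2)$ with pendant trees attached, and its snout length $s$ never changes. Within this step, the key claims are:
\begin{itemize}
\item \emph{The snout is separating.} Deleting the snout's vertices leaves each ear minus its two attachment vertices as a separate component, so at least two components. Moreover, every cycle through an ear other than the ear itself uses the attachment edge $v_{2i-1}v_{2i}$ as a chord, so it is not induced. Hence the only induced non-separating cycles are the two ears.
\item \emph{Contractions only touch ears.} An admissible path $u,w,v$ therefore lies on an ear. Its ends $u,v$ are ear vertices, possibly one of them an attachment vertex. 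Contracting them shortens that ear by $2$ and hangs $w$ (with its tree) as a pendant. At worst it identifies an attachment vertex with an ear vertex, so the snout cycle keeps its length $s$.
\item \emph{Degenerate ears.} An ear could shrink below length $3$. Then $r$ drops, which we have excluded, or the core degenerates in some way that I would rule out by a short case check.
\end{itemize}
Finally, deleting pendant trees does not affect the core. Since cycle edges are never deleted, the snout length stays $l+k$ throughout. This contradicts obtaining snout length $l<l+k$.

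\textbf{Expected main obstacle.} The bookkeeping in the inductive step is where I expect the difficulty. One must check that after ear contractions and pendant-tree deletions no new induced non-separating cycle arises. One must also handle the case where $u$ or $v$ is an attachment vertex. The hard part is arguing cleanly that the only candidate cycles are still the ears, each with chord $v_{2i-1}v_{2i}$, and never the snout, which remains separating as long as both ears have length at least $3$.
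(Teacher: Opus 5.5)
Your cycle-rank invariant is a good idea and differs from the paper's route. Since $r$ never increases and both dogs have $r=3$, every step must preserve $r$. This excludes deleting cycle edges, and it excludes every contraction on an ear of length $3$ or $4$, because there $u,v$ are adjacent or have two common neighbours. That includes the snout-shortening contraction on a triangular ear.

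\textbf{The gap.} Your enumeration of the remaining admissible contractions is incomplete. You allow $u$ or $v$ to be an attachment vertex, but never the middle vertex $w$. Take an ear of length $m_i\geq 5$ with cycle $v_{2i-1},v_{2i},a_1,\dots,a_{m_i-2}$, and contract $u=v_{2i-1}$ with $v=a_1$ via $w=v_{2i}$.
\begin{itemize}
\item If some $l_i\geq 5$, this is already an admissible first step from $D(l+k,l_1,l_2)$: the ear is induced and non-separating.
\item $u$ and $v$ are non-adjacent and $w$ is their only common neighbour, so $r$ is preserved.
\item The snout keeps its length and still carries the other ear.
\item The old ear becomes a cycle of length $m_i-2$ that meets the rest of the graph only in the merged vertex, which is now a cut vertex. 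This is Case 3 in the paper's proof.
\end{itemize}
So your invariant ``the $2$-connected core is always a dog $D(s,m_1,m_2)$ with pendant trees'' is false. At this point neither $r$ nor the snout length has changed, so neither gives a contradiction.

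\textbf{How to close it.} You need an ingredient that stops later steps from rebuilding a $2$-connected graph. One option: an admissible contraction identifies two vertices of one cycle, hence of one block, so distinct blocks never fuse. This is the paper's observation that a $2$-connected bipartite minor of $G$ is a bipartite minor of some block of $G$. After the step above every block has cycle rank at most $2$, so $D(l,l_1,l_2)$, which has cycle rank $3$, cannot be reached. Alternatively, enlarge your invariant to allow ears that hang off the snout through a cut vertex. Then check that such a cycle stays separated from the snout by a cut vertex under all further $r$-preserving operations, so the graph never becomes $2$-connected again.

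With this patch your argument is complete and arguably tidier than the paper's. The cycle-rank bound replaces the sub-cases $l_i\le 4$ and the induction over ear lengths. The block argument handles the one case that cycle rank cannot detect.
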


Before proving \Cref{thm: minor but not bip minor}, we note that admissible contractions only enable us to contract vertices in a common cycle.
Since no cycle in a graph can contain vertices from two different blocks of the graph, we immediately get the following.
\begin{observation}
    \label{obs: 2-conn bip minor implies bip minor of block}
    If $H$ is a $2$-connected bipartite minor of $G$, then $H$ is a bipartite minor of a block of $G$.
\end{observation}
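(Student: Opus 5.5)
We prove a stronger statement by induction along the sequence of operations. \emph{Claim:} if $G'$ is obtained from $G$ by a single vertex-deletion, edge-deletion or admissible contraction, then every block $B'$ of $G'$ is a bipartite minor of some block $B$ of $G$. Given the claim, take $H=H_0,\dots,H_k=G$ as in the definition of $H\bipMinorOf G$. Since $H$ is $2$-connected, $H$ is itself a block of $H_0$. Applying the claim repeatedly, together with transitivity of $\bipMinorOf$, yields a block of $G$ of which $H$ is a bipartite minor. The degenerate cases where $H$ has at most two vertices are trivial and are handled separately: such an $H$ is a subgraph of a trivial or nontrivial block, or of a single vertex.

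For deletions the claim is immediate. A block $B'$ of $G'$ is a $2$-connected subgraph of $G$, so it is contained in some block $B$ of $G$, and hence $B'$ is a subgraph of $B$. The same argument works for the admissible-contraction case whenever $B'$ does not contain the new vertex $x=v_{\{u,v\}}$, because then $B'$ is a $2$-connected subgraph of $G-\{u,v\}$.

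The remaining case is the contraction of $u,v$, where $u,w,v$ lies on an induced non-separating cycle $C$ of $G$, and $B'$ contains $x$. Let $B_0$ be the block of $G$ containing $C$. The argument has two steps.
\begin{enumerate}
\item We show the contraction is admissible already in $B_0$.
 \begin{itemize}
 \item $C$ is induced in $B_0$, since $B_0$ is a subgraph of $G$.
 \item $C$ is non-separating in $B_0$. Suppose $B_0-V(C)$ had two components $K_1,K_2$. A path in $G-V(C)$ joining them would have to leave $B_0$. Such a path, together with $B_0$, would form a larger $2$-connected subgraph, contradicting the maximality of $B_0$. Hence $K_1$ and $K_2$ lie in different components of $G-V(C)$. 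Then $G-V(C)$ has more components than $G$, contradicting that $C$ is non-separating in $G$.
 \end{itemize}
 Therefore $B_0/\{u,v\}\bipMinorOf B_0$.
\item We show that $B'$ is a subgraph of $B_0/\{u,v\}$, i.e.\ that every vertex and edge of $B'$ comes from $B_0$.
 \begin{itemize}
 \item For a vertex $y\ne x$ of $B'$, $2$-connectivity of $B'$ gives a cycle in $G'$ through $x$ and $y$.
 \item Lifting this cycle to $G$ gives either a cycle through $y$ meeting $\{u,v\}$, or a $u$--$v$ path through $y$.
 \item In the second case we close it up using a $u$--$v$ path inside $B_0$, e.g.\ along $C$, choosing the side that avoids the lifted path's interior if necessary.
 \item In both cases $y$ lies on a cycle sharing at least two vertices with $B_0$, or it lies on a path with both ends in $B_0$ and internally disjoint from it. By maximality of $B_0$, $y\in V(B_0)$.
 \item Edges of $B'$ are then edges of $B_0/\{u,v\}$, because $B_0$ is an induced subgraph of $G$: a block contains every edge of $G$ between its vertices.
 \end{itemize}
\end{enumerate}
Combining the two steps gives $B'\bipMinorOf B_0/\{u,v\}\bipMinorOf B_0$, which proves the claim.

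The main obstacle is step 2. When lifting a cycle through $x$, one must rule out that it uses vertices outside $B_0$ and returns via $u$ or $v$. I would handle this with the standard fact that any path whose two ends lie in a block and whose interior avoids the block must in fact lie inside the block.
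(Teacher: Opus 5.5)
\textbf{There is a genuine gap in Step 2.} Your reduction to a block-by-block claim is sound, and so is Step 1 (the contraction is already admissible inside $B_0$). But Step 2 asserts something false: a block $B'$ of $G'$ containing $x$ need not be a subgraph of $B_0/\{u,v\}$.

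Here is a counterexample. Let $G$ be the $4$-cycle $C=u,w,v,z$ with a second $4$-cycle $u,a,b,c$ glued on at $u$.
\begin{itemize}
\item $C$ is induced and non-separating, since $G-V(C)$ is the path $a,b,c$; also $B_0=C$.
\item After contracting $u,v$ into $x$, the blocks of $G'$ are $xw$, $xz$ and the $4$-cycle $x,a,b,c$.
\item The vertices $a,b,c$ of that last block are not in $B_0$.
\end{itemize}
The step that breaks is ``in both cases $y$ lies on a cycle sharing at least two vertices with $B_0$''. When the lift of the cycle through $x$ and $y$ is a cycle of $G$, it passes through exactly one of $u,v$ and may meet $B_0$ nowhere else, so maximality of $B_0$ gives nothing. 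The fact you planned to invoke, about paths with both ends in a block, does not rescue this: here the excursion leaves and re-enters $B_0$ at the same vertex. (Also, ``a cycle through $x$ and $y$'' does not exist when $B'$ is a trivial block $xy$.)

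\textbf{How to repair it.} The claim itself survives, but you need a case for blocks of $G$ other than $B_0$ that pass through $u$ or $v$.
\begin{itemize}
\item Each component $L$ of $G-V(B_0)$ has at most one neighbour in $B_0$. Two neighbours would give an ear that enlarges $B_0$.
\item Since $B'-x$ is connected, it lies in a single component of $G'-x=G-\{u,v\}$.
\item Suppose that component is some $L$ attached at $u$ (symmetrically at $v$). Then $v$ has no neighbour in $L$, so $G'[V(L)\cup\{x\}]\cong G[V(L)\cup\{u\}]$ via $x\mapsto u$. Hence $B'$ is isomorphic to a $2$-connected subgraph of $G$, and so to a subgraph of a block of $G$ (one different from $B_0$).
\item Otherwise, a vertex of $B'$ outside $B_0$ would have to lie in a component $L$ attached at some $t\in V(B_0)\setminus\{u,v\}$. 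Components with no attachment to $B_0$ are not even in the same component of $G'$ as $x$. But $t$ separates $L$ from $x$ in $G'$, contradicting the $2$-connectivity of $B'$.
\item Hence $V(B')\subseteq (V(B_0)\setminus\{u,v\})\cup\{x\}$. Since $B_0/\{u,v\}$ is an induced subgraph of $G'$, $B'$ is a subgraph of it.
\end{itemize}
This also covers trivial blocks. With this repair, your argument becomes a rigorous version of the paper's one-line justification (contracted vertices lie on a common cycle, and a cycle lies inside one block), which glosses over exactly this point.
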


\begin{corollary} 
    \label{cor: 2-conn bip minor implies bip minor of block}
    All $2$-connected bipartite minors of a cycle are themselves cycles or paths of length two.
    All $2$-connected bipartite minors of a one-eared dog are one-eared dogs, cycles, or paths of length two.
\end{corollary}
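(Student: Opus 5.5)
The plan is to track how a single admissible contraction, a vertex deletion, or an edge deletion changes a cycle or a one-eared dog, and then argue by induction along the sequence $H=H_0,\dots,H_k=G$. The point is that $2$-connectedness of the final graph forces every intermediate graph relevant to it to stay inside a controlled family. I will first reduce to the relevant family. Consider the first graph in the sequence (counting down from $G$) on which a deletion destroys $2$-connectivity. Then the end result $H$ must be a bipartite minor of a block of the resulting graph, by \Cref{obs: 2-conn bip minor implies bip minor of block}, because $H$ is $2$-connected. So it suffices to show two things. First, the family $\mathcal{F}$ consisting of cycles, one-eared dogs, and the path of length two (i.e.\ $K_2$ viewed as a degenerate block) is closed under the operation \enquote{apply one operation, then pass to a block}. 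Second, the blocks appearing along the way stay in $\mathcal{F}$; a $2$-connected $H$ obtained at the end is then in $\mathcal{F}$.

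For a cycle $C_p$, the key steps are as follows.
\begin{itemize}
  \item Deleting a vertex or an edge leaves a path, whose blocks are single edges.
  \item An admissible contraction of two vertices at distance two yields $B(p-2,1)$, as computed in the proof of \Cref{thm: bip minor but not minor}. Its blocks are $C_{p-2}$ and a single edge.
  \item If $p=4$, contracting two opposite vertices gives a path $P_3$, which has only trivial blocks.
\end{itemize}
So the blocks of a cycle's image are cycles or edges. Moreover, a graph with a pendant path (like a bull) can only contract inside its snout, and again the only nontrivial block is a cycle. For the induction I will therefore carry the slightly stronger invariant: \emph{every block is a cycle or an edge}, i.e.\ the graph is a cactus-like structure with cycle blocks. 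I must check that admissible contraction preserves this. An induced non-separating cycle through $u,w,v$ must be one of the cycle blocks. Contracting $u,v$ in that block shortens it by two and turns $w$ into a pendant vertex. The rest of the graph hangs off vertices of the cycle, and merging $u$ and $v$ creates no new cycle, because any $u$–$v$ path outside the block would contradict the block structure. Hence the invariant holds and the first claim follows.

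For a one-eared dog $D(l,l_1)$, with shared edge $v_1v_2$, the steps are these.
\begin{itemize}
  \item The only induced non-separating cycles are the snout and the ear. Their union is the whole graph, and removing either of them keeps the graph connected, since the rest is a path.
  \item An admissible contraction along the snout, at a centre $w$ away from $v_1,v_2$, shortens the snout by $2$ and creates a pendant vertex. If the centre is $v_1$ or $v_2$, contracting $u$ with $v_2$ merges vertices, but the result is still a theta-type graph (two cycles sharing an edge), which is a one-eared dog or degenerates to a cycle when the snout length drops to $2$, with the leftover vertex hanging off as a pendant.
  \item Deletions turn the graph into a cycle with pendant paths, or a graph whose blocks are cycles and edges.
\end{itemize}
So the invariant becomes: every block is a one-eared dog, a cycle, or an edge, with at most one dog block. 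I then verify the closure under each operation as in the cycle case.

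The main obstacle is the careful case analysis of admissible contractions whose centre $w$ is a degree-three vertex ($v_1$ or $v_2$), or where $u$ or $v$ is, together with the degenerate small cases ($l_1=4$ or small snouts). Here contraction can merge the two cycles' vertices. I need to confirm that the result's nontrivial blocks are still theta graphs of dog shape, i.e.\ two internally disjoint paths plus the shared edge, or cycles, and never something with three independent cycles. I will handle this by noting that contraction does not increase the cycle rank. The $2$-connected block containing the merged vertex has cycle rank at most two, and a $2$-connected graph of cycle rank two is a theta graph. A bipartite theta graph is then a one-eared dog exactly when one of its three paths has length one; I expect to check that this edge survives, which I would verify directly from the contraction of $u,v$ at distance two.
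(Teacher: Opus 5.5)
Your route is valid and is organised differently from the paper's, but your description of the dog case is wrong in exactly the case you flag as the obstacle.

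\textbf{Comparison with the paper.} The paper gives no separate proof and treats the corollary as immediate from \Cref{obs: 2-conn bip minor implies bip minor of block}. Implicitly, one checks that a single operation on a cycle (resp.\ one-eared dog) produces only blocks of the allowed types, using the same three-case analysis done for two-eared dogs in the proof of \Cref{thm: minor but not bip minor}. The observation then confines $H$ to one block, and one inducts on size.

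You instead carry a global invariant through the whole sequence: all blocks are cycles or edges, and in the dog case additionally at most one one-eared dog block. The observation is then not needed at the end, since a $2$-connected $H$ is a single block. The price is showing that an admissible contraction acts inside one block of a graph with many blocks. Your remark that every $u$--$v$ path stays in the block containing $u,v$ does this correctly.

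Your cycle-rank remark is a real gain. Since $u,v$ share a neighbour, $|E|-|V|+c$ never increases. So every block has rank at most $2$, at most one block has rank $2$, and that block is a theta graph. All that remains is to check that a surviving theta block keeps a path of length one.

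Two smaller points. Drop the opening reduction via ``the first deletion that destroys $2$-connectivity'': contractions destroy $2$-connectivity too (e.g.\ $C_p\to B(p-2,1)$), and the invariant does not need it. In the cycle case, $w$ becomes pendant only if no other block is attached at $w$; in general $wz$ becomes a bridge.

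\textbf{The error in the dog case.} Write the contracted cycle as $C=v_1,v_2,a_1,\dots,a_{m-2}$ and the other cycle as $C'=v_1,v_2,b_1,\dots,b_{m'-2}$, sharing the edge $v_1v_2$. Take centre $w=v_2$ with $\{u,v\}=\{v_1,a_1\}$, merged into $z$.
\begin{itemize}
\item For $m\ge 5$ the result is the cycles $z,a_2,\dots,a_{m-2}$ and $v_2,b_1,\dots,b_{m'-2},z$, of lengths $m-2$ and $m'$, sharing only $z$. There is no theta graph and no pendant vertex; the dog splits at $z$, as in case~3 of the proof of \Cref{thm: minor but not bip minor}.
\item For $m\le 4$ only $C_{m'}$ survives, plus a bridge if $m=4$.
\end{itemize}
Your invariant still holds, since two cycle blocks are allowed. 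But the check you planned, that ``this edge survives'', is the wrong one here; what you must show is that no rank-two block survives.

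\textbf{The corrected verification is short.}
\begin{itemize}
\item If $m\le 4$, the two contracted vertices have two common neighbours or are adjacent, so the rank drops and no theta block can remain.
\item If $m\ge 5$ and $w\in\{v_1,v_2\}$, the dog splits at $z$ as computed above.
\item If $m\ge 5$ and $w\notin\{v_1,v_2\}$, the shared edge either is untouched or, when $w=a_1$, $u=v_2$, $v=a_2$, becomes $zv_1$. In both cases it still lies on both cycles, giving $D(m-2,m')$ plus a bridge.
\end{itemize}
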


\begin{proof}[Proof of \Cref{thm: minor but not bip minor}]
    Contracting two adjacent vertices that are in the snout and at least one of which is not in either ear of $D(l+k,l_1,l_2)$ yields $D(l+k-1,l_1,l_2)$.
    Thus, proceeding by induction over $k$, we have that $D(l,l_1,l_2)\leq_{M}D(l+k,l_1,l_2)$.

    To see that $D(l,l_1,l_2)\not\leq_{B}D(l+k,l_1,l_2)$, consider the blocks of a graph $H'$ obtained from $D(l+k,l_1,l_2)$ using one of the operations permitted for bipartite minors.   We claim that the blocks of $H'$ are trivial blocks, one-eared dogs, cycles, or two-eared dogs with at least one strictly shorter ear.
    Clearly this holds if $H'$ is obtained by deleting a vertex or an edge.
    Now assume that $H'$ is obtained via an admissible contraction, say of vertices $u,v$ with common neighbour $w$ such that $u,w,v$ is part of an induced non-separating cycle of $D(l+k,l_1,l_2)$.
    Note that the snout of $D(l+k,l_1,l_2)$ is separating, so $u,w,v$ must be part of an ear, say the one of length $l_i$.
    We distinguish cases based on whether they also belong to the snout.
    Up to renaming we assume that if one of $u,v$ is in the snout, then this holds for $u$.
    \Cref{fig:case analysis proof minor but not bip minor} illustrates these cases.
        \begin{itemize}
            \item [1.] If neither of $u$ nor $w$ is in the snout, then (by assumption) neither is $v$.  Note that this implies $l_i\geq 5$. The blocks of $H'$ are a two-eared dog where the sum of the lengths of the ears is $l_1+l_2-2$, and a trivial block.
            \item [2.] If $u$ is in the snout and $w$ is not, then we consider two sub-cases.
            If $l_i\geq 5$, then the contraction reduces the ear to length $l_i-2$ (and creates a trivial block).
            If $l_i\leq 4$, then the contraction removes the ear and adds a trivial block; for $l_i=3$ it also shortens the snout.
            \item [3.] If both $u$ and $w$ are in the snout, then we again examine different sub-cases.
            If $l_i\geq 5$, then the contraction changes the ear to a cycle of length $l_i-2$ attached to the snout at the contracted vertex. 
            If $l_i\leq 4$, then the contraction removes the ear; for $l_i=4$ it also adds a trivial block. 
        \end{itemize}
    
    \begin{figure}[ht!]
        \centering
        \begin{subfigure}[t]{0.47\linewidth}
            \centering
            \includegraphics[scale=\scale]{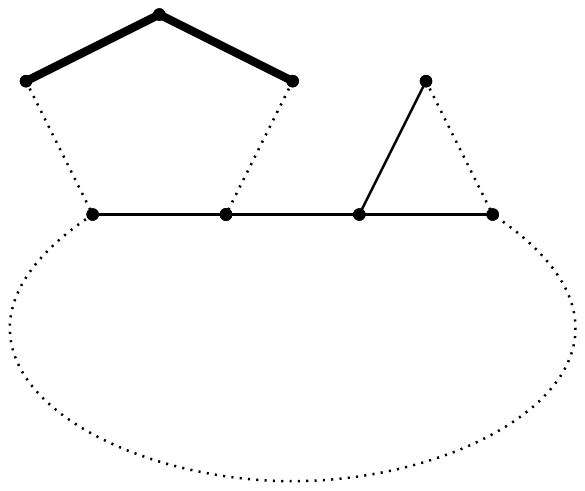}
            \caption{Contraction when neither of $u,w$ is in the snout.}
            \label{subfig: two-eared dog ear len geq 5}
        \end{subfigure}
        \hfill
        \begin{subfigure}[t]{0.47\linewidth}
            \centering
            \includegraphics[scale=\scale]{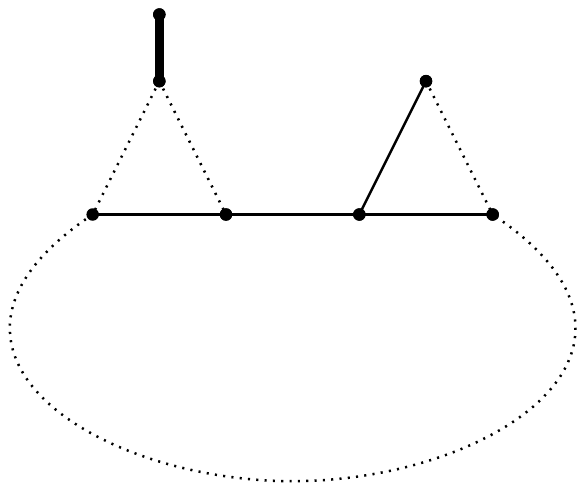}
            \caption{The resulting graph.}
        \end{subfigure}
    
        \vspace*{10pt}
    
        \begin{subfigure}[t]{0.47\linewidth}
            \centering
            \includegraphics[scale=\scale]{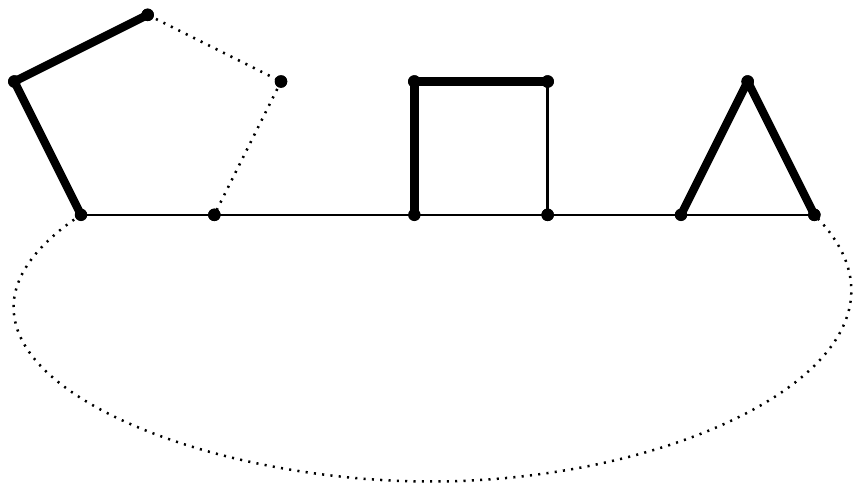}
            \caption{Contraction when $u$ is in the snout and $w$ is not.}
        \end{subfigure}
        \hfill
        \begin{subfigure}[t]{0.47\linewidth}
            \centering
            \includegraphics[scale=\scale]{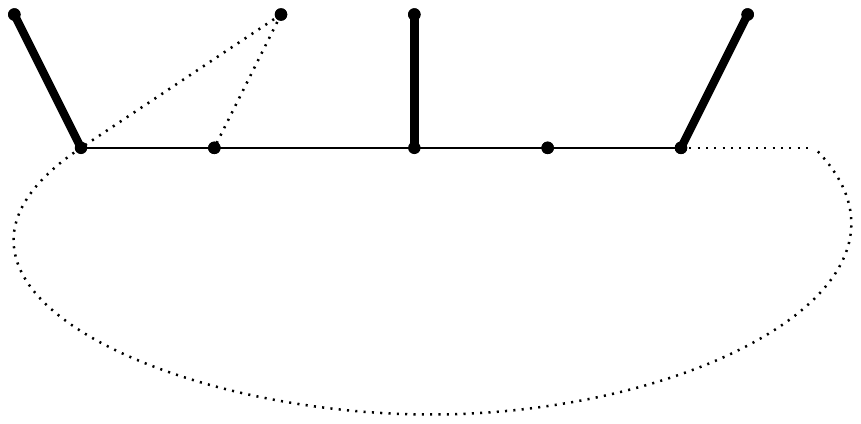}
            \caption{The resulting graph.}
        \end{subfigure}
    
        \vspace*{10pt}
    
        \begin{subfigure}[t]{0.47\linewidth}
            \centering
            \includegraphics[scale=\scale]{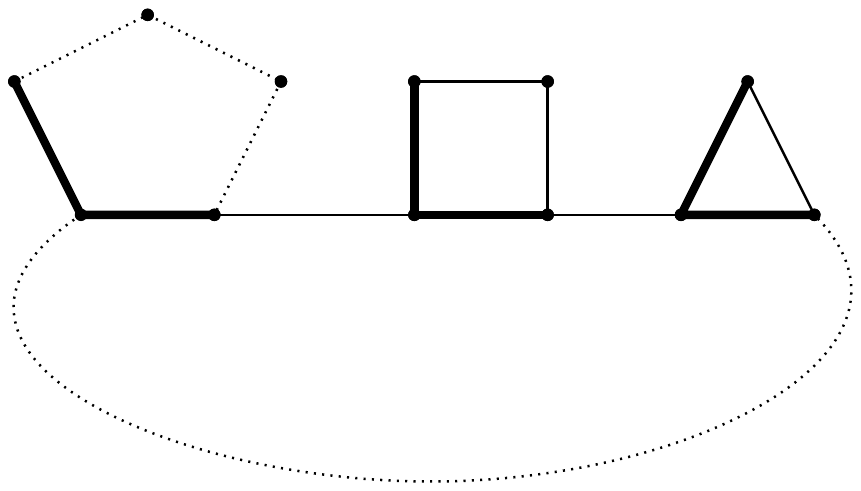}
            \caption{Contraction when $u,w$ are both in the snout.}
        \end{subfigure}
        \hfill
        \begin{subfigure}[t]{0.47\linewidth}
            \centering
            \includegraphics[scale=\scale]{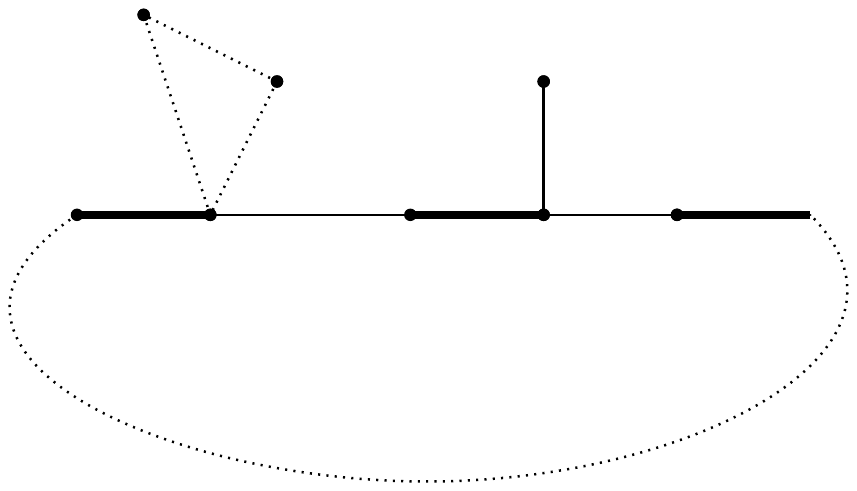}
            \caption{The resulting graph.}
        \end{subfigure}

        \caption{The admissible contractions contemplated in the proof of \Cref{thm: minor but not bip minor}.
        Thick edges illustrate the path $u,w,v$ (we show multiple cases of $l_i$ by using multiple ears).}
        \label{fig:case analysis proof minor but not bip minor}
    \end{figure}
    So in all cases the blocks of $H'$ are as desired  (trivial, cycles, one-eared dogs or two-eared dogs at least one strictly shorter ear).
    Combining an inductive argument over the sum of the lengths of the ears with \Cref{cor: 2-conn bip minor implies bip minor of block}, we have that $D(l,l_1,l_2)$ is not a bipartite minor of $D(l+k,l_1,l_2)$.
\end{proof}

\subsection{Bipartite Minors and Well-Quasi-Order}
\label{sec: bip minors and well-order}
We finish this note with a negative answer to \Cref{question: wqo of bipartite minors}.
\begin{theorem}
    \label{thm: bip minor not a well ordering}
    The bipartite minor relation is not a well-quasi-order, not even on the class of $2$-connected bipartite graphs.
\end{theorem}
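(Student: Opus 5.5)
We will exhibit an infinite antichain of $2$-connected bipartite graphs built from two-eared dogs, reusing the block analysis in the proof of \Cref{thm: minor but not bip minor}. The natural candidates are dogs $D(l,l_1,l_2)$ with fixed ears and growing snout, say $\mathcal{A}=\{D(2m,4,4): m\geq 3\}$. For the ears to fit disjointly on the snout we need $2k\le l$, so $l=2m\geq 6$ suffices. Every element is bipartite, since all cycle lengths are even and the graph is obtained by gluing even cycles along edges. Every element is also $2$-connected, since it arises from a cycle by attaching ears along edges. We must show that $D(2m,4,4)\not\bipMinorOf D(2m',4,4)$ for all $m\neq m'$.

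\textbf{Case $m<m'$.} This is exactly \Cref{thm: minor but not bip minor} with $l=2m\geq 5$, $l_1=l_2=4$ and $k=2(m'-m)$.

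\textbf{Case $m>m'$.} Here the target has more vertices than the host. Every deletion and every contraction strictly decreases the number of vertices or edges, and none increases either. Since $D(2m,4,4)$ has strictly more vertices than $D(2m',4,4)$, it cannot be obtained from it by a sequence of such operations.

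The main point to verify is the invariant used in the first case. In the proof of \Cref{thm: minor but not bip minor}, the invariant is that every block of any graph reachable from $D(l+k,l_1,l_2)$ is trivial, a cycle, a one-eared dog, or a two-eared dog with ear-length sum strictly less than $l_1+l_2$, unless no admissible contraction has yet been applied. Under deletions alone, the two-eared dog blocks that survive are subgraphs of $D(l+k,l_1,l_2)$ with the same snout. A two-eared dog subgraph must then equal the whole graph, so its snout has length $l+k\neq l$. I will make this step explicit: a $2$-connected subgraph of the host that is a two-eared dog with ears of lengths $4$ and $4$ must contain both ears and a snout cycle through both attachment edges. In the host, the only such cycle is the original snout, of length $l+k$. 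Combining this with \Cref{obs: 2-conn bip minor implies bip minor of block} and the induction on ear-length sum, as in \Cref{thm: minor but not bip minor}, rules out $D(l,4,4)$.

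Both cases together show that $\mathcal{A}$ is an infinite set of pairwise $\bipMinorOf$-incomparable $2$-connected bipartite graphs. Listing its elements in any order gives an infinite sequence with no pair $x_i\bipMinorOf x_j$, $i<j$, so $\bipMinorOf$ is not a well-quasi-order on this class.
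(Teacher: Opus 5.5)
Your proposal is correct and is essentially the paper's proof: you use the antichain of dogs $D(2m,4,4)$, with \Cref{thm: minor but not bip minor} ruling out one direction and vertex counting (left implicit in the paper) ruling out the other. Starting at snout length six keeps you strictly inside that theorem's hypothesis $l\geq 5$ (the paper's family also includes $D(4,4,4)$). Your extra paragraph re-checking the invariant is unnecessary, since a single deletion from a two-eared dog already leaves no two-eared dog block.
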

\begin{proof}
    It follows immediately from \Cref{thm: minor but not bip minor} that
    \[\curlybrackets{D(k,4,4)\mid k\text{ a positive even number at least four}}\]
    is a set of graphs that are pairwise incomparable with respect to the bipartite minor relation.
    Clearly all graphs in this set are bipartite and $2$-connected.
    Hence, the bipartite minor relation is not a well-quasi-order on the class of $2$-connected bipartite graphs.
\end{proof}

Despite being of some importance in our proof of \Cref{thm: minor but not bip minor}, we conjecture that the number two plays no special role in this respect.
\begin{conjecture}
    There exists no $k$ such that the class of $k$-connected bipartite graphs is well-quasi-ordered by the bipartite minor relation.
\end{conjecture}

\section{Acknowledgments}
We would like to thank Christopher Burcell for helpful input.

\bibliographystyle{elsarticle-num.bst}
\bibliography{bibliography}
\end{document}